\theoremstyle{plain}
\newtheorem{thm}{Theorem}[section]
\newtheorem{lem}[thm]{Lemma}
\theoremstyle{definition}
\newtheorem{defn}[thm]{Definition}
\theoremstyle{remark}
\begin{document}

\title[Delicacy of the RH]{Delicacy of the Riemann hypothesis and certain subsequences of superabundant numbers}
\author{Sadegh Nazardonyavi, Semyon Yakubovich}
\email{sdnazdi@yahoo.com\vspace*{10pt}}

\email{syakubov@fc.up.pt}
\maketitle
\begin{abstract}
Robin's theorem is one of the ingenious reformulation of the Riemann hypothesis (RH). It states that the RH is true if and only if $\sigma(n)<e^\gamma n\log\log n$ for all $n>5040$ where $\sigma(n)$ is the sum of divisors of $n$ and $\gamma$ is Euler's constant. In this paper we show that how the RH is delicate in terms of certain subsets of superabundant numbers, namely extremely abundant numbers and some of its specific supersets.
\end{abstract}
\section{Introduction}
Let $\sigma(n)$ be the sum of divisors of a positive integer $n$. Gronwall \cite{GR} showed that the order of $\sigma(n)$ is very nearly $n$. More precisely

\begin{equation}\label{GR}
    \limsup_{n\rightarrow\infty}\frac{\sigma(n)}{n\log\log n}=e^\gamma.
\end{equation}

In 1984, Robin \cite{Robin} established an elegant problem equivalent to the RH which is stated in the following theorem.
\begin{thm}[\cite{Robin}]
The Riemann hypothesis is equivalent to
\begin{equation}\label{Rob}
\sigma(n)<e^\gamma n\log\log n,\qquad\text{for all }\ n>5040.
\end{equation}
where $\gamma\approx0.57721566$ is Euler's constant.
\end{thm}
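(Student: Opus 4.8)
The plan is to prove the two implications separately. Throughout put $f(n)=\sigma(n)/(n\log\log n)$ and recall that $\sigma(n)/n=\prod_{p^{a}\Vert n}\sigma(p^{a})/p^{a}$ depends only on the multiset of exponents of $n$, so that $\sigma(n)/n$ attains its successive record values exactly at the \emph{superabundant} numbers $n=\prod_{i=1}^{k}p_{i}^{a_{i}}$, where $p_{1}<\cdots<p_{k}$ are the first $k$ primes and $a_{1}\ge a_{2}\ge\cdots\ge a_{k}\ge 1$. The first step is to reduce \eqref{Rob} to superabundant $n$: if $f(n)\ge e^{\gamma}$ for some $n>5040$ and $n'$ is the largest superabundant number with $n'\le n$, then $\sigma(m)/m\le\sigma(n')/n'$ for every $m$ in the block between $n'$ and the next superabundant number, while $\log\log n\ge\log\log n'$, so $f(n')\ge f(n)\ge e^{\gamma}$. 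Since the superabundant numbers are sparse, it remains to examine them, and those below an explicit bound $N_{0}$ are disposed of by a finite computation (using that on each block $\sigma(m)/m$ does not exceed its value at the left endpoint, together with the crude bound $\sigma(m)/m\le\sigma(5040)/5040$ on the first few blocks above $5040$).

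For RH $\Rightarrow$ \eqref{Rob}, consider a superabundant $n>N_{0}$ with largest prime factor $P$. The ingredients are: the identity $\sigma(n)/n=\prod_{p\le P}(1-1/p)^{-1}\cdot\prod_{p\le P}(1-p^{-(a_{p}+1)})$; the structural inequality $\log n=\sum_{i}a_{i}\log p_{i}\ge\theta(P)$, hence $\log\log n\ge\log\theta(P)$; the effective form of Mertens' third theorem; and, crucially, Schoenfeld's RH-conditional estimate $|\theta(x)-x|<\tfrac{1}{8\pi}\sqrt{x}\,\log^{2}x$ for $x\ge 599$, which sharpens the error in Mertens' formula from $O(1/\log^{2}x)$ to $O(\log^{2}x/\sqrt{x})$. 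Using these one writes both $\sigma(n)/n$ and $e^{\gamma}\log\log n$ as $e^{\gamma}\log P$ times $1+O(\log^{2}P/\sqrt{P})$ and verifies, by tracking the implied constants, that the former is the smaller once $P$ exceeds a computable threshold; here one must distinguish according to the exponent pattern of $n$, exploiting the savings factor $\prod(1-p^{-(a_{p}+1)})$ when many exponents equal $1$ (where it is close to the constant $\prod_{p}(1-p^{-2})=1/\zeta(2)$) and the slack in $\theta(P)\le\log n$ when the exponents are large. Combined with the finite check below $N_{0}$, this yields \eqref{Rob}.

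For \eqref{Rob} $\Rightarrow$ RH, argue by contraposition: if RH fails, then $\Theta:=\sup\{\operatorname{Re}\rho:\zeta(\rho)=0\}>\tfrac12$; fix $b$ with $\tfrac12<b<\Theta$. The classical $\Omega$-theorems (Landau, via the explicit formula for $\psi$) give $\theta(x)-x=\Omega_{\pm}(x^{b})$, and in fact $\theta(x)-x$ stays of size $\gg x^{b}$ with a fixed sign on suitable multiplicative intervals. Choose such an $x=x_{j}\to\infty$ and let $n_{j}$ be the colossally abundant number whose largest prime factor is close to $x_{j}$; then $\log n_{j}=\theta(x_{j})+O(\sqrt{x_{j}})$ and the savings factor is $1+o(1)$, so $\sigma(n_{j})/n_{j}$ and $e^{\gamma}\log\log n_{j}$ now agree to the precision where the oscillation of $\theta(x_{j})-x_{j}$, being of size $x_{j}^{b}$ with $b>\tfrac12$, dominates every remaining deterministic term. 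Hence $f(n_{j})>e^{\gamma}$ for infinitely many $j$, contradicting \eqref{Rob}.

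The crux — the source of the ``delicacy'' in the title — is the constant-level bookkeeping in the regime where the two sides of \eqref{Rob} very nearly coincide: for a colossally abundant $n$ with largest prime $P$, the slack $\log n-\theta(P)$, the deviation of the savings factor from $1$, and the deviation of $\prod_{p\le P}(1-1/p)^{-1}$ from $e^{\gamma}\log P$ are all of the same order, $\asymp\log^{2}P/\sqrt{P}$, as the RH-controlled quantity $\theta(P)-P$. No soft $O(\cdot)$-estimate separates them: one needs sharp explicit inequalities (Schoenfeld, Rosser--Schoenfeld) with carefully retained constants, plus an exact accounting of the competing savings and slack described above. This is precisely why subsequences of superabundant numbers — the ones that approach the extremal behaviour of $f$ most tightly — are the natural objects through which to probe the hypothesis.
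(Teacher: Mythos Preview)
The paper does not prove this theorem at all: it is quoted as Robin's result \cite{Robin} and used as background, so there is no ``paper's own proof'' to compare your attempt against. What you have written is a sketch of Robin's original argument, not of anything in the present paper.

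As a sketch of Robin's proof your outline has the right architecture (reduction to superabundant/colossally abundant numbers, Schoenfeld's RH-conditional bound $|\theta(x)-x|\ll\sqrt{x}\,\log^{2}x$ for one direction, Landau-type $\Omega_{\pm}$ oscillation of $\theta(x)-x$ for the other), but several points are imprecise or wrong in detail. First, your remark that the ``savings factor'' $\prod_{p}(1-p^{-(a_{p}+1)})$ is close to $1/\zeta(2)$ when many exponents equal $1$ describes primorials, for which Robin's inequality is trivial; the delicate case is colossally abundant $n$, where only the primes in a short top range have $a_{p}=1$ and the savings factor is in fact $1+O(1/\sqrt{P})$, not bounded away from $1$. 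Second, the reduction step just above $5040$ is not handled by ``$\sigma(m)/m\le\sigma(5040)/5040$'', since $f(5040)>e^{\gamma}$; one actually needs the finite check against $e^{\gamma}\log\log m$ on that block. Third, in the converse you assert that $\theta(x)-x$ ``stays of size $\gg x^{b}$ with a fixed sign on suitable multiplicative intervals''; the standard $\Omega_{\pm}$ theorems only give sequences, and turning this into the oscillation statement \eqref{Robin oscillation} for $f(n)$ on colossally abundant $n$ requires a more careful argument (this is precisely what Robin carries out). So the proposal is a plausible roadmap but not yet a proof, and in any case it is not something the present paper attempts.
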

Inequality (\ref{Rob}) is called Robin's inequality. He also showed that for all $n\geq3$

\begin{equation}\label{Rob 0.6482}
\frac{\sigma(n)}{n}\leq e^\gamma\log\log n+\frac{0.648214}{\log\log n},
\end{equation}
where $0.648214\approx(\frac73-e^\gamma\log\log12)\log\log12$ with equality for $n=12$.

A positive integer $n$ is called superabundant number ((\cite{Al}, see also \cite{Ram.N})) if
$$
\frac{\sigma(n)}{n}>\frac{\sigma(m)}{m},\qquad\text{for all }\ m<n.
$$
and it is called colossally abundant, if for some $\varepsilon>0$,
\begin{equation*}\label{CA defn}
\frac{\sigma(n)}{n^{1+\varepsilon}}\geq\frac{\sigma(m)}{m^{1+\varepsilon}},\qquad\text{for all }m>1.
\end{equation*}
Robin also proved that if the RH is not true, then for colossally abundant numbers we have
\begin{equation}\label{Robin oscillation}
\frac{\sigma(n)}{n\log\log n}=e^\gamma(1+\Omega_\pm(\frac{1}{\log^b n}))
\end{equation}
where $b$ is any number of the interval $(1-\theta, 1/2)$, and $\theta$ being the upper bound of
real parts of the zeros of the Riemann zeta function.

In 2009, Akbary and Friggstad \cite{Ak} proved that to find the first probable counterexample to Robin's inequality it is enough to look for a special subsequence of positive integers. Indeed, If there is any counterexample to (\ref{Rob}), then the least such counterexample is a superabundant number (cf.\cite{Ak}).\\

During the study of Robin's theorem and looking for the first probable integer which violates (\ref{Rob}) and belongs to a proper subset of superabundant numbers, authors in \cite{Sadegh} constructed a new subsequence of positive integers (called its elements extremely abundant numbers) via the following definition:
\begin{defn}\label{extn}
A positive integer $n$ is an \emph{extremely abundant} number, if either $n=10080$ or
\begin{equation}\label{extn1}
\forall m\quad\text{\upshape{s.t.}} \quad10080\leq m<n,\qquad\frac{\sigma(m)}{m\log\log m}<\frac{\sigma(n)}{n\log\log n}.
\end{equation}
\end{defn}
We denote the following sets of integers by
\begin{align*}
  S =&\ \{n:\ \ n \mbox{\ is superabundant}\}, \\
  X =&\ \{n:\ \ n \mbox{\ is extremely abundant}\}.
\end{align*}
We also use SA and XA as abbreviations of superabundant and extremely abundant, respectively. It can be shown  that $X\subset S$ (see \cite{Sadegh}). Combining Gronwall's theorem with Robin's theorem,  authors \cite{Sadegh} established the following interesting results:\\[-8pt]
\begin{enumerate}[(i)]
  \item If there is any counterexample to Robin's inequality, then the least one is an XA number.
  \item The RH is true if and only if $X$ is an infinite set.
\end{enumerate}
The statement (ii) is the first step for showing the delicacy of RH.\\

\begin{defn}
Let $n_1=10080$. We find $n_2$ such that
\begin{equation*}\label{X' defn}
\frac{\sigma(n_2)/n_2}{\sigma(n_1)/n_1}>1+\frac{\log n_2/n_1}{\log n_2\log\log n_1}.
\end{equation*}
Now we find $n_3$ such that
\begin{equation*}\label{X' defn}
\frac{\sigma(n_3)/n_3}{\sigma(n_2)/n_2}>1+\frac{\log n_3/n_2}{\log n_3\log\log n_2},
\end{equation*}
and so on. We define $X'$ to be the set of all $n_1,\ n_2,\ n_3,\ldots$.
\end{defn}

\begin{equation}\label{X<X'<S}
X\subset X'\subset S.
\end{equation}
\begin{lem}
If $m\in X'$, then there exists $n>m$ such that

\begin{equation}\label{s(n)/n>s(m)/m(1+log(n/m)/(log n loglog m))}
\frac{\sigma(n)/n}{\sigma(m)/m}>1+\frac{\log n/m}{\log n\log\log m}.
\end{equation}
\end{lem}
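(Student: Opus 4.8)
The plan is to recast the desired inequality as a lower bound for $\sigma(n)/n$, to note that its right-hand side stays below a constant depending only on $m$, and then to use that $\sigma(n)/n$ is unbounded. Explicitly, \eqref{s(n)/n>s(m)/m(1+log(n/m)/(log n loglog m))} is equivalent to
\[
\frac{\sigma(n)}{n}>\frac{\sigma(m)}{m}\left(1+\frac{\log(n/m)}{\log n\,\log\log m}\right),
\]
so it is enough to exhibit a single integer $n>m$ for which this holds.

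First I would bound the right-hand side uniformly in $n$. Since $m\in X'$ we have $m\ge n_1=10080$, so $\log m>0$ and $\log\log m>0$; hence for every $n>m$,
\[
\frac{\log(n/m)}{\log n}=1-\frac{\log m}{\log n}<1,
\]
and therefore $\frac{\sigma(m)}{m}\big(1+\frac{\log(n/m)}{\log n\,\log\log m}\big)<C_m$, where $C_m:=\frac{\sigma(m)}{m}\big(1+\frac1{\log\log m}\big)$ depends on $m$ alone. Thus the lemma is reduced to finding some $n>m$ with $\sigma(n)/n>C_m$.

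For this last step I would invoke Gronwall's theorem \eqref{GR}: from $\limsup_{n\to\infty}\sigma(n)/(n\log\log n)=e^\gamma>0$ and $\log\log n\to\infty$ it follows that $\limsup_{n\to\infty}\sigma(n)/n=\infty$, so the set $\{n:\sigma(n)/n>C_m\}$ is infinite and in particular meets $(m,\infty)$. Picking such an $n$ gives $\sigma(n)/n>C_m>\frac{\sigma(m)}{m}\big(1+\frac{\log(n/m)}{\log n\,\log\log m}\big)$, which is \eqref{s(n)/n>s(m)/m(1+log(n/m)/(log n loglog m))}.

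I do not foresee a genuine obstacle; the argument is soft, the one thing to check carefully being that the factor $1+\log(n/m)/(\log n\log\log m)$ never reaches the fixed value $1+1/\log\log m$ while $\sigma(n)/n$ grows without bound. If one wants the conclusion to actually furnish the next term of the sequence $X'$, one takes the least admissible $n$; by the standard minimality argument (as in \cite{Ak}) such an $n$ is superabundant, in agreement with \eqref{X<X'<S}, but this refinement is not needed for the lemma as stated.
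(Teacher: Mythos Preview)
Your argument is correct and in fact cleaner than the paper's. You observe directly that for $n>m\ge 10080$ one has $\log(n/m)/\log n=1-\log m/\log n<1$, so the right-hand side of the desired inequality is bounded above by the constant $C_m=\frac{\sigma(m)}{m}\bigl(1+\frac{1}{\log\log m}\bigr)$; then the unboundedness of $\sigma(n)/n$ (an immediate consequence of Gronwall's theorem) furnishes the required $n$. The paper instead invokes Robin's explicit bound \eqref{Rob 0.6482} to control $\sigma(m)/m$, shows that the auxiliary function $m'\mapsto \frac{\log\log m}{\log\log m'}\bigl(1+\frac{\log(m'/m)}{\log m'\,\log\log m}\bigr)$ decreases to $0$, chooses $m'$ so that this function times $e^\gamma+0.648214/(\log\log m)^2$ equals $e^\gamma-\varepsilon$, and finally applies Gronwall to find $n\ge m'$ with $\sigma(n)/n>(e^\gamma-\varepsilon)\log\log n$. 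That route is more quantitative---it ties the admissible $n$ to the level set of an explicit function and keeps $\sigma(n)/n$ close to $e^\gamma\log\log n$---but for the bare existence statement of the lemma your soft bound suffices and avoids the need for \eqref{Rob 0.6482} altogether. Note also, as you implicitly do, that neither argument uses the hypothesis $m\in X'$ beyond $m\ge 10080$.
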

\begin{proof}
Given $m\in X'$. Then by (\ref{Rob 0.6482})

\begin{equation}\label{Rob 0.6482 m}
\frac{\sigma(m)}{m}\leq \left(e^\gamma+\frac{0.648214}{(\log\log m)^2}\right)\log\log m,
\end{equation}
Since
$$
\frac{\log\log m}{\log\log m'}\left(1+\frac{\log m'/m}{\log m'\log\log m}\right)<1
$$
and decreasing for $m'>m$ and tends to 0 as $m'$ goes to infinity, then for some $m'>m$ we have
\begin{equation}\label{loglog m/loglog m'...=ey-eps}
\frac{\log\log m}{\log\log m'}\left(1+\frac{\log m'/m}{\log m'\log\log m}\right)\left(e^\gamma+\frac{0.648214}{(\log\log m)^2}\right)=e^\gamma-\varepsilon,
\end{equation}
where $\varepsilon>0$. Hence by Gronwall's theorem there is $n\geq m'$ such that
\begin{align*}
\frac{\sigma(n)}{n}&>(e^\gamma-\varepsilon)\log\log n\\
                   &=\frac{\log\log m}{\log\log m'}\left(1+\frac{\log m'/m}{\log m'\log\log m}\right)\left(e^\gamma+\frac{0.648214}{(\log\log m)^2}\right)\log\log n\\
                   &\geq\left(1+\frac{\log n/m}{\log n\log\log m}\right)\frac{\sigma(m)}{m},
\end{align*}
where the last inequality holds by (\ref{Rob 0.6482 m}) and (\ref{loglog m/loglog m'...=ey-eps}).
\end{proof}

Now we are going to state the main theorem of this paper which is the second step towards the delicacy of the RH, i.e.,
\begin{thm}\label{X'=infty}
The set $X'$ has infinite number of elements.
\end{thm}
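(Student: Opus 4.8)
The plan is to observe that the recursive procedure defining $X'$ can be continued forever, with the preceding lemma supplying, at each stage, precisely the integer required to pass to the next term. Recall that $X'=\{n_1,n_2,n_3,\dots\}$ is built by setting $n_1=10080$ and then, once $n_k$ has been chosen, picking some integer $n_{k+1}>n_k$ with
$$
\frac{\sigma(n_{k+1})/n_{k+1}}{\sigma(n_k)/n_k}>1+\frac{\log(n_{k+1}/n_k)}{\log n_{k+1}\,\log\log n_k}.
$$
Thus $X'$ could fail to be infinite only if, at some stage $k$, no integer $n_{k+1}$ meeting this condition existed.

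First I would argue by induction on $k$ that the construction never stalls. The base case is immediate, since $n_1=10080\in X'$ by definition. For the inductive step, suppose $n_1,\dots,n_k$ have been constructed and all lie in $X'$; in particular $n_k\in X'$. Applying the lemma with $m=n_k$ then yields an integer $n>n_k$ satisfying (\ref{s(n)/n>s(m)/m(1+log(n/m)/(log n loglog m))}) for $m=n_k$, which is exactly the condition required for $n$ to be admissible as $n_{k+1}$ (and if a canonically defined sequence is wanted, one simply lets $n_{k+1}$ be the least such integer). Hence $n_{k+1}$ exists, $n_{k+1}>n_k$, and $n_{k+1}\in X'$. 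Consequently the sequence $(n_k)_{k\ge1}$ is strictly increasing, its terms are pairwise distinct, and the construction proceeds indefinitely; therefore $X'$ is an infinite set.

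I do not expect a genuine obstacle at this level, since the analytic content --- Robin's bound (\ref{Rob 0.6482}), Gronwall's relation (\ref{GR}), and the monotone decay in $m'$ of $\frac{\log\log m}{\log\log m'}\bigl(1+\frac{\log(m'/m)}{\log m'\log\log m}\bigr)$ --- has already been spent in proving the lemma, so the present theorem is in effect a corollary of it, obtained by feeding the lemma's conclusion back into its own hypothesis. The only points that merit a sentence of care are that the lemma's conclusion matches, clause for clause, the membership requirement for $X'$ (it plainly does), and, should one also want each constructed term to be superabundant so as to recover $X'\subset S$, that the $\limsup$ in (\ref{GR}) is already attained along the superabundant numbers, so that the integer $n$ produced inside the lemma may be chosen in $S$. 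Neither point affects the conclusion that $X'$ has infinitely many elements.
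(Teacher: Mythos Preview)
Your argument is correct: the lemma, applied with $m=n_k$, furnishes an admissible $n_{k+1}$ at every stage, so the recursive construction of $X'$ never terminates. You have also correctly observed that the hypothesis ``$m\in X'$'' in the lemma is not actually used in its proof (only Robin's bound (\ref{Rob 0.6482}) and Gronwall's theorem enter), so the induction on membership in $X'$ is in fact unnecessary---but it does no harm.

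The paper takes a genuinely different route: it splits into cases on the truth of the Riemann hypothesis. If RH holds, the paper invokes statement~(ii) (that $X$ is infinite) together with the inclusion $X\subset X'$ from (\ref{X<X'<S}) to conclude immediately. If RH fails, the paper argues that $\sigma(m)/(m\log\log m)$ attains a maximum at some $m_0\ge 10080$ (this $m_0$ is the last element of the then-finite set $X$), applies the lemma at $m_0$, and takes the least $n>m_0$ satisfying the inequality to produce a new element of $X'$. Your approach is more economical: it avoids any appeal to RH or to the equivalence~(ii), and it makes transparent that the theorem is a direct corollary of the lemma via iteration. The paper's route, on the other hand, ties the result back to the RH dichotomy that is the paper's theme, though as written its RH-false branch produces only a single new element of $X'$ and would itself need to be iterated (exactly as you do) to yield infinitely many.
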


\begin{proof}
If the RH is true, then the set $X'$ has infinite elements by (\ref{X<X'<S}). If RH is not true, then there exists $m_0\geq10080$ such that
$$
\frac{\sigma(m_0)/m_0}{\sigma(m)/m}>\frac{\log\log m_0}{\log\log m},\qquad\text{for all\ }\ m\geq10080.
$$
By Lemma \ref{s(n)/n>s(m)/m(1+log(n/m)/(log n loglog m))} there exists $m'>m_0$ such that $m'$ satisfies the inequality
$$
\frac{\sigma(m')/m'}{\sigma(m_0)/m_0}>1+\frac{\log m'/m_0}{\log m'\log\log m_0}.
$$
Let $n$ be the first number greater than $m_0$ which satisfies
$$
\frac{\sigma(n)/n}{\sigma(m_0)/m_0}>1+\frac{\log n/m_0}{\log n\log\log m_0}.
$$
Then $n\in X'$.
\end{proof}

\begin{defn}
Let $n_1=10080$. We find $n_2$ such that
\begin{equation*}\label{X' defn}
\frac{\sigma(n_2)/n_2}{\sigma(n_1)/n_1}>1+\frac{2\log n_2/n_1}{(\log n_2+\log n_1)\log\log n_1}.
\end{equation*}
Now we find $n_3$ such that
\begin{equation*}\label{X' defn}
\frac{\sigma(n_3)/n_3}{\sigma(n_2)/n_2}>1+\frac{2\log n_3/n_2}{(\log n_3+\log n_2)\log\log n_2},
\end{equation*}
and so on. We define $X''$ to be the set of all $n_1,\ n_2,\ n_3,\ldots$.
\end{defn}
Note that
$$
\#X=8150,\qquad\#X''=8187,\qquad\#X'=8378
$$
up to the $250\,000^{th}$ element of $S$ (we used the list of SA numbers tabulated in \cite{Noe}) and
$$
\#(X''\backslash X)=37,\qquad\#(X'\backslash X)=228.
$$
\section*{Acknowledgement}
 We would like to express our thanks to Professor Jean-Louis Nicolas  for his suggestions in the initial version. We would like to thank Muhammad Ali Khan and  Melinda Oroszl\'{a}nyov\'{a} for their comments and suggestions which rather improved the English presentation of the paper.


\begin{thebibliography}{}
\bibitem{Ak} A. Akbary, Z. Friggstad, Superabundant numbers and the Riemann hypothesis, Amer. Math. Monthly 116 (2009), no. 3, 273--275.
\bibitem{Al} L. Alaoglu, P. Erd\H{o}s, On highly composite and similar numbers, Trans. Amer. Math. Soc. 56, (1944). 448--469.

\bibitem{GR} T. H. Gronwall, Some asymptotic expressions in the theory of numbers, Trans. Amer. Math. Soc. 14 (1913), no. 1, 113--122.

\bibitem{Sadegh} S. Nazardonyavi, S. Yakubovich,  Superabundant numbers, their subsequences and the Riemann hypothesis, arXiv:1211.2147.

\bibitem{Noe}  T. D. Noe, First $1\,000\,000$ superabundant numbers, \url{http://oeis.org/A004394}.

\bibitem{Ram.N} S. Ramanujan, Highly composite numbers, Annotated and with a foreword by Jean-Louis Nicolas and Guy Robin. Ramanujan J. 1 (1997), no. 2, 119--153.

\bibitem{Robin} G. Robin, Grandes valeurs de la fonction somme des diviseurs et hypoth\`{e}se de Riemann, J. Math. Pures Appl. (9) 63 (1984), no. 2, 187--213.
\vspace*{20pt}
\end{thebibliography}
\end{document}